\documentclass[10pt]{amsart}
\usepackage{graphicx}
\usepackage{latexsym}
\usepackage{amssymb}                
\usepackage{amsmath}
\usepackage{amsthm}
\usepackage{amscd}
\usepackage{enumerate}
\usepackage{tikz}
\usetikzlibrary{arrows}
\usepackage{float}
\usepackage{hyperref}

\theoremstyle{plain}
\newtheorem{thm}{Theorem}[section]
\newtheorem{lem}[thm]{Lemma}
\newtheorem{prop}[thm]{Proposition}
\newtheorem{cor}[thm]{Corollary}

\theoremstyle{definition}
\newtheorem{defi}[thm]{Definition}

\theoremstyle{remark}

\setlength{\parskip}{3.5pt plus1pt minus1pt}

\title{Centralizers in the Group of Interval Exchange Transformations}
\author{Daniel Bernazzani}

\begin{document}

\begin{abstract}
We study the group of interval exchange transformations. Let $T$ be an $m$-interval exchange transformation. By the rank of $T$ we mean the dimension of the $\mathbb{Q}$-vector space spanned by the lengths of the exchanged subintervals. We prove that if $T$ satisfies Keane's infinite distinct orbit condition and $\text{rank}(T)>1+\lfloor m/2 \rfloor$ then the only interval exchange transformations which commute with $T$ are its powers. 

In the case that $T$ is a minimal 3-interval exchange transformation, we prove a more precise result: $T$ has a trivial centralizer in the group of interval exchange transformations if and only if $T$ satisfies the infinite distinct orbit condition. 
\end{abstract}

\maketitle
\thispagestyle{empty}

\section{Introduction}

An interval exchange transformation (IET) is a bijective map $T:[0,1)\rightarrow [0,1)$ defined by partitioning the unit interval $[0,1)$ into finitely many subintervals and then rearranging these subintervals by translations. The formal definition appears below. The permutation group of the set $\lbrace 1,2,\dots,m \rbrace$ will be denoted by $S_m$. 

\begin{defi}\label{defi:iet}
Let $m\in \mathbb{N}$. Let $\pi \in S_m$ and let $\lambda=(\lambda_1,\lambda_2,\dots,\lambda_m)$ be a vector in the simplex $$\Delta_{m}=\bigg\lbrace (\lambda_1,\lambda_2,\dots,\lambda_{m})\in \mathbb{R}^m \hspace{1mm} : \hspace{1mm} \lambda_i >0, \hspace{1mm}\sum_i \lambda_i = 1 \bigg\rbrace.$$ Let $$\beta_0=0 \text{ and } \beta_j=\sum_{i=1}^j\lambda_j \text{ for } 1\leq j \leq m.$$ The set $\lbrace \beta_0,\beta_1,\dots,\beta_m \rbrace$ partitions $[0,1)$ into $m$ subintervals of the form $I_j=[\beta_{j-1},\beta_j)$. Define $T_{(\pi,\lambda)}:[0,1)\rightarrow [0,1)$ by $$T_{(\pi,\lambda)}(x)=x - \Bigg(\sum_{i<j}\lambda_i\Bigg) +\Bigg(\sum_{\pi(i)<\pi(j)}\lambda_i\Bigg), \text{ for } x\in I_j.$$ The map $T_{(\pi,\lambda)}$ rearranges the intervals $I_j$ by translations according to the permutation $\pi$. We will refer to a map constructed in this manner as an $m$-IET. For convenience, we sometimes drop the reference to $\pi$ and $\lambda$ and simply denote an IET by a single letter, typically $T$. 
\end{defi}  

The set of all IETs forms a group $\mathbb{G}$ under composition. Given $T\in \mathbb{G}$, let $C(T)$ denote the centralizer of $T$ in $\mathbb{G}$ and let $\langle T \rangle$ denote the cyclic subgroup generated by $T$. Novak proved that the quotient $C(T)/\langle T \rangle$ is typically finite \cite[Proposition~5.3]{Novak1}. However, there are examples of IETs which satisfy the hypotheses of Novak's theorem for which $C(T) \neq \langle T \rangle$ (some of these examples will be described in the next section of this paper). In fact, to the best of the author's knowledge, the only previously known examples where it could be proven that $C(T)=\langle T \rangle$ are due to del Junco \cite{delJunco}, who constructed IETs which do not commute with any Lebesgue measure preserving transformations of $[0,1)$ except for their powers.   

In this paper, we will show that for a large class of IETs, the groups $C(T)$ and $\langle T \rangle$ coincide. Recall that an IET $T$ is said to be minimal if for each $x\in [0,1)$, the orbit $\mathcal{O}_T(x)=\lbrace T^n(x) : n\in \mathbb{Z} \rbrace$ is dense in $[0,1)$. Recall also that an IET is said to be of rotation type if there exists $\alpha \in \mathbb{R}$ such that $T(x)=x+\alpha \hspace{2pt}(\text{mod }1)$ for all $x\in [0,1)$.

\begin{thm}\label{thm:centralizers general}
Let $T$ be a minimal IET which is not of rotation type. Suppose that the lengths of the exchanged subintervals are linearly independent over $\mathbb{Q}$. Then $C(T)=\langle T \rangle$.
\end{thm}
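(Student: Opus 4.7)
The plan is to derive Theorem 1.1 as an immediate corollary of the main result announced in the abstract: if $T$ is an $m$-IET satisfying Keane's infinite distinct orbit condition and $\text{rank}(T)>1+\lfloor m/2 \rfloor$, then $C(T)=\langle T \rangle$. The reduction amounts to verifying the rank and Keane hypotheses from the hypotheses of Theorem 1.1.

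First, I would argue that in its canonical representation, $T$ exchanges $m\geq 3$ subintervals. Indeed, a $1$-IET is the identity, and minimal $2$-IETs are precisely the irrational rotations, which are excluded by hypothesis. Since the $m$ lengths $\lambda_1,\ldots,\lambda_m$ are $\mathbb{Q}$-linearly independent, the $\mathbb{Q}$-span of the length vector has dimension $m$, so $\text{rank}(T)=m$. For any $m\geq 3$ we have $\lceil m/2\rceil\geq 2$, equivalently $m>1+\lfloor m/2\rfloor$, which gives the required rank bound.

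Next, I would verify Keane's infinite distinct orbit condition. The discontinuities $\beta_1,\ldots,\beta_{m-1}$ of $T$ are partial sums of the $\lambda_i$, and each iterate of $T$ translates a point by a $\mathbb{Z}$-linear combination of the $\lambda_i$. If some positive iterate $T^n\beta_i$ equaled $\beta_j$, or if a $\beta_i$ had a finite forward orbit, rearranging would produce a nontrivial $\mathbb{Z}$-linear relation among the $\lambda_i$, contradicting their assumed $\mathbb{Q}$-linear independence. Hence Keane's condition holds (and incidentally gives minimality for free, so that assumption is subsumed).

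With both hypotheses of the main theorem of the paper verified, we conclude $C(T)=\langle T\rangle$. The substantive content is of course contained in the main theorem itself, which the body of the paper is devoted to proving; the only mildly delicate point in this reduction is insisting on the canonical IET presentation so that the non-rotation hypothesis really forces $m\geq 3$, and, within the main theorem, the anticipated hard step (flagged in the abstract) is showing that the centralizer is torsion-free under these hypotheses.
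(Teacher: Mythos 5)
Your proposal is correct and follows essentially the same route as the paper: reduce to the main theorem (Theorem 3.4) by checking that the canonical presentation has $m\geq 3$, that $\mathrm{rank}(T)=m>1+\lfloor m/2\rfloor$, and that the i.d.o.c.\ holds. The only difference is that the paper simply cites Keane's theorem for the i.d.o.c.\ step rather than re-deriving it; your sketch of that step glosses over why the resulting integer relation among the $\lambda_i$ must be nontrivial, but since this is precisely Keane's result it is safely citable and the argument stands.
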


\noindent Theorem \ref{thm:centralizers general} is a simplified version of our main result, which will be stated in the next section of this paper (see Theorem \ref{thm:centralizer rank}).

We will denote $n$-fold compositions $T\circ \cdots \circ T$ by $T^n$. We will say that $T$ has an $n^{\text{th}}$ root in $\mathbb{G}$ if there exists $S\in \mathbb{G}$ such that $T=S^n$. If $T$ is minimal, then the existence of a $n^{\text{th}}$ root for some $n\geq 2$ implies that $C(T) \neq \langle T \rangle$. In a previous paper \cite{Bernazzani1}, the author proved that many IETs do not have any nontrivial roots in $\mathbb{G}$. In particular, it was proven that an IET which satisfies the hypotheses of Theorem \ref{thm:centralizers general} does not have an $n^{\text{th}}$ root for any $n\geq 2$. Though the non-existence of roots is weaker than the statement that $C(T)=\langle T \rangle$, our proofs of the results in this paper depend on the results in \cite{Bernazzani1}.

In the case that $T$ is a 3-IET, we will prove a more precise result. We recall the following definition. 
 
\begin{defi}\label{defi:idoc}
Let $T$ be an $m$-IET. Let $\beta_1,\beta_2,\dots,\beta_{m-1}$ be as in Definition 1.1. We say that $T$ satisfies the \textit{infinite distinct orbit condition} (IDOC) if each of the orbits $\mathcal O_T(\beta_1),\mathcal O_T(\beta_2),\dots,\mathcal O_T(\beta_{m-1})$ is infinite and $\mathcal O_T(\beta_i) \cap \mathcal O_T(\beta_j)=\emptyset$ for $i\neq j$.
\end{defi}

\noindent The IDOC was originally formulated by Keane, who showed that any IET which satisfies it and exchanges two or more intervals is minimal \cite{Keane1}. 

\begin{thm}\label{thm:centralizers idoc}
Let $T$ be a minimal 3-IET which is not of rotation type. Then $C(T)=\langle T \rangle$ if and only if $T$ satisfies the infinite distinct orbit condition. 
\end{thm}

Let $T$ be a 3-IET which is not of rotation type, and let $\lambda_1,\lambda_2,\lambda_3$ be the lengths of the exchanged subintervals. As explained in \cite[page~251]{Bernazzani1}, $T$ is minimal if and only if $\dfrac{\lambda_1 - \lambda_3}{1-\lambda_3} \notin \mathbb{Q}$. Moreover, assuming that $T$ is minimal, $T$ satisfies the IDOC if and only if $n(\lambda_1 -\lambda_3) \neq \lambda_1 + m(1-\lambda_3)$ for every pair of integers $m,n$. This enables one check whether or not $C(T)=\langle T \rangle$, provided one understands the rational dependencies among $\lambda_1,\lambda_2,\lambda_3$. 

\section{Centralizers in $\mathbb{G}$}

In this section we will define the rank of an IET and use this to state our main result, a generalization of Theorem \ref{thm:centralizers general}. We will also explain how our results are related to the work of Novak \cite{Novak1} and the previous work of the author \cite{Bernazzani1}.

The study of centralizers in $\mathbb{G}$ was initiated by Novak. Given $T\in \mathbb{G}$, let $d(T^n)$ denote the number of discontinuities of $T^n$. Novak showed that for a given $T\in \mathbb{G}$, there are only two possibilities for the growth rate of $d(T^n)$ \cite[Theorem~1.1]{Novak1}. The first possibility is that the growth rate is bounded: there exists a constant $M$ such that $d(T^n)\leq M$ for all $n\geq 1$. The second possibility is that $d(T^n)$ grows linearly: there exists a natural number $c$ such that $\displaystyle \lim_{n\rightarrow\infty} d(T^n)/n = c$. In the second case, Novak showed that $C(T)$ cannot be too large. 

\begin{thm}\label{thm:Novak}
(Novak \cite[Proposition~5.3]{Novak1}) Suppose that $T$ is a minimal IET and that $d(T^n)$ grows linearly in $n$. Then $\langle T \rangle$ has finite index in $C(T)$.
\end{thm}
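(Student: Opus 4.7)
The plan is to construct a homomorphism $\phi : C(T) \to \Sigma$ with $|\Sigma| < \infty$ and $\ker \phi = \langle T \rangle$; this immediately yields $[C(T):\langle T\rangle] \le |\Sigma| < \infty$. The target group is $\Sigma := \mathrm{Sym}(\mathcal{D})$, where $\mathcal{D}$ is the (finite, size $\le m-1$) set of distinct $T$-orbits among the discontinuities $\gamma_1,\dots,\gamma_{m-1}$ of $T$. For $\phi$ to make sense, one must show that every $S \in C(T)$ sends discontinuities of $T$ into $\bigcup_j \mathcal{O}_T(\gamma_j)$, so that it induces a permutation of $\mathcal{D}$.

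This is the crux and the main obstacle. The input is the identity $ST^n = T^n S$, whose two sides therefore have the same discontinuity set. Using the standard inclusion $D(UV) \subseteq D(V) \cup V^{-1}(D(U))$ together with a careful count of cancellations (each cancellation is charged to an element of $D(T^n) \cap T^{-n}(D(S))$, or symmetrically $S(D(S))\cap D(T^n)$, giving a bound $O(|D(S)|)$ independent of $n$), one obtains a set $E_n \subseteq D(T^n)$ with $|E_n| \le |D(S)|$ such that $S\bigl(D(T^n) \setminus (D(S) \cup E_n)\bigr) \subseteq D(T^n)$. Now fix $\gamma_i$ and look at its backward orbit $\{T^{-k}(\gamma_i)\}_{k\ge 0}$, which is infinite by minimality and lies in $D(T^n)$ for all $n > k$. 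Only finitely many of these points are in the fixed finite set $D(S)$, and since $|E_n|$ is uniformly bounded while the linear growth of $d(T^n) = |D(T^n)|$ supplies an unbounded reservoir of backward orbit points inside $D(T^n)$, a pigeonhole argument produces some $k \ge 0$ and $n > k$ with $T^{-k}(\gamma_i) \in D(T^n) \setminus (D(S) \cup E_n)$. For such a pair, $T^{-k}(S(\gamma_i)) = S(T^{-k}(\gamma_i)) \in D(T^n)$, which by definition writes as $T^{-j}(\gamma_\ell)$ for some $0 \le j < n$ and some $\ell$, and hence $S(\gamma_i) = T^{k-j}(\gamma_\ell) \in \mathcal{O}_T(\gamma_\ell)$. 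Making the cancellation accounting precise is the most delicate part of the argument.

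Granted this, the definition of $\phi$ is immediate: $S$ commutes with $T$, so $S(\mathcal{O}_T(\gamma_i)) = \mathcal{O}_T(S(\gamma_i))$, which by the preceding step equals $\mathcal{O}_T(\gamma_j)$ for some $j$; hence $\phi(S)$ sending $\mathcal{O}_T(\gamma_i) \mapsto \mathcal{O}_T(\gamma_j)$ is a well-defined permutation of $\mathcal{D}$, and $\phi$ is a group homomorphism.

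It remains to identify $\ker \phi$. The inclusion $\langle T\rangle \subseteq \ker\phi$ is obvious. Conversely, let $S \in \ker\phi$, so $S(\gamma_1) = T^n(\gamma_1)$ for some $n \in \mathbb{Z}$. Set $U := ST^{-n}$; then $U \in C(T)$, $U(\gamma_1) = \gamma_1$, and commutativity yields $U(T^k(\gamma_1)) = T^k(\gamma_1)$ for every $k \in \mathbb{Z}$, so $U$ fixes the orbit $\mathcal{O}_T(\gamma_1)$ pointwise. By minimality this orbit is dense in $[0,1)$. Since $U$ is an IET acting as a single translation on each of its continuity intervals, and a translation fixing a dense subset of its domain must be the zero translation, we conclude $U = \mathrm{Id}$ and $S = T^n \in \langle T\rangle$. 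Therefore $\ker\phi = \langle T\rangle$, and $\langle T\rangle$ has index at most $|\Sigma|$ in $C(T)$.
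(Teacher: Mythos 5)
Your overall architecture --- an induced permutation action of $C(T)$ on a finite set of $T$-orbits of discontinuities, with kernel computed to be $\langle T\rangle$ via the fixed-point/minimality argument --- is the right one, and it is essentially the approach of Novak's Proposition 5.3, which the paper cites rather than reproves (the paper extracts exactly these two ingredients as Lemma 4.2 and Lemma 4.4). Your kernel computation and the cancellation bound $|E_n|=O(|D(S)|)$ are fine in outline.

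However, there is a genuine gap in the crux step: the claim that for every discontinuity $\gamma_i$ the backward orbit point $T^{-k}(\gamma_i)$ lies in $D(T^n)$ for all $n>k$ is false. Novak's dichotomy (recalled in Section 4.2 of the paper) says that for a discontinuity $p$ whose backward orbit avoids $D(T)$, either $T^n$ is discontinuous at $p$ for all large $n$ (the \emph{fundamental} case) or $T^n$ is \emph{continuous} at $p$ for all large $n$; in the latter case $T^n$ is likewise eventually continuous at every $T^{-k}(p)$, so your ``unbounded reservoir'' of points of $D(T^n)$ in the backward orbit simply does not exist, and the pigeonhole argument collapses. Consequently you cannot conclude that $S$ sends \emph{every} discontinuity into $\bigcup_j \mathcal{O}_T(\gamma_j)$, and $\phi$ is not well defined on your $\mathcal{D}$. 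The repair is exactly the device the paper imports from Novak: restrict $\mathcal{D}$ to the orbits of the \emph{fundamental} discontinuities $D_f(T)$. The linear growth hypothesis on $d(T^n)$ guarantees $D_f(T)\neq\emptyset$ (bounded growth would otherwise follow), the reservoir argument is valid for fundamental discontinuities by definition, and your kernel argument only needs a single such orbit. With that substitution your proof goes through and coincides with Novak's.
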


\noindent An examination of the proof of Proposition 2.3 of \cite{Novak1} makes it clear that any IET $T$ satisfying the IDOC will exhibit linear discontinuity growth, provided that $T$ is not of rotation type. Novak makes a similar observation on p. 381 of his paper.

It is now convenient to introduce a definition. If $T$ is an $m$-IET, and $\beta_1,\dots,\beta_{m-1}$ are as in Definition \ref{defi:iet}, then it is clear that the discontinuities of $T$ must be among $ \beta_1,\dots,\beta_{m-1}$. However, it is possible that $T$ is continuous at some of these points. Whether or not $T$ is continuous at these points depends on the permutation $\pi$. More specifically, if $1 \leq i \leq m-1$, then $T_{(\pi,\lambda)}$ is discontinuous at $\beta_i$ if and only if $\pi(i+1) \neq \pi(i)+1$. This motivates the following definition.  

\begin{defi}\label{defi:separating}
We will say that $\pi \in S_m$ is \textit{separating} if $\pi(i+1)\neq \pi(i)+1$ for $1\leq i \leq m-1$.
\end{defi}

For example, the permutation $\tau =(4213)\in S_4$ is separating, while the permutation $\sigma = (4231) \in S_4$ is not. The following result, whose proof can be found in \cite[Proposition~2.3]{Bernazzani1}, shows that there is no loss of generality in only considering IETs defined by separating permutations. 

\begin{prop}\label{prop:separating existence}
Let $T$ be an IET with precisely $m-1$ discontinuities. There exists a separating permutation $\pi \in S_m$ and a vector $\lambda \in \Delta_m$, both of which are unique, such that $T=T_{(\pi,\lambda)}$.
\end{prop}

\noindent We will make use of the following special case of Theorem \ref{thm:Novak}.

\begin{prop}\label{prop:finite index}
Let $m\geq 3$. Suppose that $T$ is an $m$-IET defined by a separating permutation. If $T$ satisfies the IDOC, then $\langle T \rangle$ has finite index in $C(T)$. 
\end{prop}
\begin{proof}
Recall that any IET which satisfies the IDOC is minimal. Therefore, by Theorem \ref{thm:Novak} and the remarks immediately following it, we only need to show that $T$ is not of rotation type. Since the permutation defining $T$ is separating, $T$ has $m-1\geq 2$ discontinuities, so $T$ is not of rotation type.    
\end{proof}

It is natural to wonder under what circumstances we have equality $C(T)=\langle T \rangle$. The hypotheses of Theorem \ref{thm:Novak} are not enough to guarantee this. For example, let $S$ be a 3-IET with permutation $(321)$ which satisfies the IDOC. Let $T=S^n$ for some $n\geq 2$. Then $T$ is minimal and has linear discontinuity growth, but $C(T)\neq \langle T \rangle$, since $S\in C(T)$ and $S\not\in \langle T \rangle$.

As the preceding example shows, one obstruction to having $C(T)=\langle T \rangle$ is the existence of an $n^{\text{th}}$ root for some $n\geq 2$. In a previous paper, the author showed that most IETs do not have nontrivial roots. In order to state this result precisely, we recall the following definition.

\begin{defi}\label{defi:rank}
Let $T$ be an IET. Let $\gamma_1 < \gamma_2 < \cdots < \gamma_{m-1}$ be the points at which $T$ is discontinuous. Let $\gamma_0=0$ and $\gamma_m=1$. We will refer to the dimension of the $\mathbb{Q}$-vector space spanned by $\lbrace \gamma_i - \gamma_{i-1} : 1\leq i \leq m\rbrace$ as the \textit{rank} of $T$. This will be denoted by rank$(T)$.
\end{defi}

The term ``rank" was originally used in this setting by Boshernitzan \cite{Bosh_RankTwo}, who proved that minimal rank two IETs are uniquely ergodic and described an algorithm which tests a rank two IET for minimality and aperiodicity.

Using the notation of Definition \ref{defi:iet}, we observe that if $T_{(\pi,\lambda)}$ is an $m$-IET and $\pi$ is separating, then the discontinuities of $T$ are precisely the points $\beta_1,\beta_2,\dots,\beta_{m-1}$, so $\text{rank}(T)$ is equal to the dimension of the $\mathbb{Q}$-vector space spanned by $\lambda_1,\lambda_2,\dots,\lambda_m$. If $\pi$ is not separating, then $\text{rank}(T_{(\pi,\lambda)})$ may be smaller than the dimension of the $\mathbb{Q}$-vector space spanned by $\lambda_1,\lambda_2,\dots,\lambda_m$. 

Notice that if $T$ is an $m$-IET defined by a separating permutation, then $\text{rank}(T)$ can assume any value from $1$ to $m$. The following result was established by the author \cite[Theorem~2.4]{Bernazzani1}.

\begin{thm}\label{thm:roots rank}
Let $T$ be a minimal $m$-IET defined by a separating permutation. Suppose that rank$(T) > 1 +\lfloor m/2 \rfloor$. Then $T$ does not have an $n^{\text{th}}$ root in $\mathbb{G}$ for any $n\geq 2$.
\end{thm}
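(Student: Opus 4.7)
The plan is to argue by contradiction: assume $T = S^n$ for some $S \in \mathbb{G}$ and $n \geq 2$. First, $S$ is minimal, because every $S$-orbit contains a dense $T$-orbit, so by Proposition 2.1 we may take $S$ to be a $k$-IET with admissible permutation. Note that $k \geq 3$: if $k \leq 2$ then $S$, hence $T = S^n$, is a rotation, forcing $m \leq 2$ and $\text{rank}(T) \leq 2$, which contradicts $\text{rank}(T) > 1 + \lfloor m/2 \rfloor \geq 2$.

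Next I would extract two basic inequalities linking $T$ to $S$. The composition $S \circ \cdots \circ S$ is discontinuous at $x$ exactly when some $S^j(x)$, $0 \leq j < n$, is a discontinuity of $S$, so the discontinuity set of $T$ is contained in $\bigcup_{j=0}^{n-1} S^{-j}(D_S)$, where $D_S$ is the discontinuity set of $S$. This gives the counting inequality $m - 1 \leq n(k-1)$. On the rank side, each preimage $S^{-j}(\delta)$ differs from $\delta$ by an integer combination of the lengths $\ell_1, \ldots, \ell_k$ of $S$, since every application of $S^{-1}$ translates by one of finitely many $\mathbb{Z}$-linear combinations of the $\ell_i$. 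Therefore every discontinuity, and hence every length, of $T$ lies in the $\mathbb{Q}$-span of $\ell_1, \ldots, \ell_k$, and $\text{rank}(T) \leq \text{rank}(S) \leq k$. Combined with the hypothesis, this yields $k \geq 2 + \lfloor m/2 \rfloor$.

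The main obstacle is that the two inequalities $m - 1 \leq n(k-1)$ and $k \geq 2 + \lfloor m/2 \rfloor$ are compatible (for $n = 2$, for instance, both reduce to $k$ of order $m/2$), so no contradiction is immediate. The strategy to finish is to sharpen the rank bound by exploiting the $S$-orbit structure of the discontinuities of $T$. The $m - 1$ discontinuities of $T$ are distributed among at most $k - 1$ forward $S$-orbits, each of cardinality at most $n$; when $m - 1$ is large relative to $k - 1$, many pairs of consecutive discontinuities $\gamma_{j-1} < \gamma_j$ of $T$ must belong to the same $S$-orbit, and for such pairs $l_j = \gamma_j - \gamma_{j-1}$ equals one of the fixed translation values prescribed by $S$'s permutation on a specific cell of $S$'s partition. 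Each such coincidence yields a $\mathbb{Q}$-linear relation among the $l_j$'s, and a careful bookkeeping, using the admissibility of $T$'s permutation to rule out spurious cancellations, should force $\text{rank}(T) \leq 1 + \lfloor m/2 \rfloor$, contradicting the hypothesis. Executing this combinatorial rank reduction uniformly over all admissible triples $(n, k, \pi_S)$ is the hard part, and is where the detailed analysis of \cite{Bernazzani1} would do the heavy lifting.
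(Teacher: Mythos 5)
First, note that the paper does not actually prove this statement; it is Theorem 3.2, imported verbatim from \cite{Bernazzani1}, so there is no in-paper proof to match against. Judged on its own terms, your proposal has a genuine gap, which you yourself flag: everything you establish rigorously --- minimality of $S$, the admissible representation with $k\geq 3$, the count $m-1\leq n(k-1)$, and $\mathrm{rank}(T)\leq\mathrm{rank}(S)\leq k$ --- is correct but, as you observe, mutually consistent with the hypotheses, and the entire burden of the theorem is then deferred to a ``combinatorial rank reduction'' that is not carried out and whose proposed mechanism does not work as described. The specific problem is this: showing that a length $l_j=\gamma_j-\gamma_{j-1}$ of $T$ ``equals one of the fixed translation values prescribed by $S$'' only re-expresses $l_j$ as a $\mathbb{Z}$-combination of the lengths $\ell_1,\dots,\ell_k$ of $S$, which you already know for \emph{every} $l_j$ (that is exactly how you got $\mathrm{rank}(T)\leq k$). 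Relations of the $l_j$ \emph{in terms of the} $\ell_i$ cannot push $\mathrm{rank}(T)$ below $k$; to beat the bound $1+\lfloor m/2\rfloor$ you need to show that the values $l_1,\dots,l_m$ themselves repeat, i.e.\ that there are few \emph{distinct} lengths among them. Moreover, two points of the same backward $S$-orbit, $S^{-a}(\delta)$ and $S^{-b}(\delta)$, need not differ by a single translation constant of $S$ --- their difference is a sum of up to $|a-b|$ such constants --- so even the relations you describe are not of the form claimed.

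The missing idea is a symmetry argument of the kind the present paper runs in Theorem 4.1 for torsion elements: one produces a partition refining the partition exchanged by $T$ whose cells are organized into groups of $n$ cells of \emph{equal} length, carried onto one another by translation under $S$ (there, under the finite-order commuting map), so that the number of distinct lengths --- and hence $\mathrm{rank}(T)$ --- is at most roughly $m/n$ plus a bounded correction, contradicting $\mathrm{rank}(T)>1+\lfloor m/2\rfloor$ once $n\geq 2$. Concretely, for $T=S^n$ the relevant structure is the refinement $\bigvee_{j=0}^{n-1}S^{-j}\mathcal{Q}$ of the partition $\mathcal{Q}$ of continuity intervals of $S$, whose boundary set is nearly invariant under the piecewise translation $S^{-1}$; exploiting that near-invariance (and controlling the boundary effects coming from $D(S)$ and $S^{-n}(D(S))$, which is where the ``$+1$'' and the non-fundamental discontinuities enter) is the actual content of the proof in \cite{Bernazzani1}. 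Without that step your argument proves only $\mathrm{rank}(T)\leq k$ and $m-1\leq n(k-1)$, which, as you note, is not a contradiction.
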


A second obstruction to having $C(T)=\langle T \rangle$ is the presence of torsion in $C(T)$. For convenience, we will give an example defined on $[0,2)$ rather than $[0,1)$. Let $S:[0,1)\rightarrow [0,1)$ be a 3-IET with permutation $(321)$ which satisfies the IDOC. Define $U:[0,2)\rightarrow [0,2)$ by $$U(x)=\begin{cases} S(x) & x\in [0,1) \\ x & x\in [1,2) \end{cases}$$ Let $P:[0,2)\rightarrow [0,2)$ be the 2-IET of order two which interchanges $[0,1)$ and $[1,2)$. Since the $U$ fixes $[1,2)$ and $PUP$ fixes $[0,1)$, $U$ and $PUP$ commute with one another. Consider the map $T=PUPUP=UPU$. It is not difficult to see that $T$ is minimal and that $d(T^n)\approx 2n$, so $T$ exhibits linear discontinuity growth. However, $P$ commutes with $T$ and $P\not\in \langle T \rangle$. 

One of the goals of this paper will be to establish that $C(T)$ is torsion-free under the following conditions. 

\begin{thm}\label{thm:torsion rank}
Let $T$ be an $m$-IET defined by a separating permutation. Suppose that $T$ satisfies the infinite distinct orbit condition and that rank$(T) > 1 +\lfloor m/2 \rfloor$. Then the only element of $C(T)$ which has finite order is the identity.
\end{thm}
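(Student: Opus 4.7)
My plan is to argue by contradiction. Suppose $P\in C(T)$ has finite order $k\geq 2$; I will construct a quotient IET $\bar T$ of $T$ by the $\langle P\rangle$-action and then contradict the rank hypothesis via the trivial bound $\text{rank}(\bar T)\leq \bar m$, where $\bar m$ is the number of exchanged intervals of $\bar T$.

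First I would show that $P$ and each of its non-identity powers acts freely on $[0,1)$. If $P(x_0)=x_0$, then by commutativity $P$ fixes the entire $T$-orbit of $x_0$, which is dense by minimality of $T$ (guaranteed by the i.d.o.c.\ and $m\geq 3$); since $P$ is a piecewise translation, each continuity piece of $P$ must then have zero translation constant, forcing $P=\text{id}$. Next I would show that $P$ permutes the discontinuity set $D_T=\{\gamma_1,\dots,\gamma_{m-1}\}$ of $T$, by using $PT=TP$ together with the i.d.o.c.\ to compare one-sided limits at $\gamma_i$ and at $P(\gamma_i)$ and invoking injectivity of $P$. Combined with freeness, every $\langle P\rangle$-orbit in $D_T$ then has size exactly $k$, so $k\mid m-1$ and $D_T$ decomposes into $(m-1)/k$ orbits.

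Next I would construct the quotient. Choose a fundamental domain $F$ for $\langle P\rangle$ which is an interval of length $1/k$, and define $\bar T\colon F\to F$ by $\bar T(x)=P^{-i(x)}(T(x))$, with $i(x)\in\{0,1,\dots,k-1\}$ the unique index for which the result lies in $F$. After rescaling $F$ to $[0,1)$, $\bar T$ is an IET exchanging $\bar m=(m-1)/k+1$ intervals---one per $\langle P\rangle$-orbit in $D_T$---and one verifies $\text{rank}(\bar T)=\text{rank}(T)$: the lengths of the $\bar T$-intervals lie in $\text{span}_{\mathbb{Q}}\{l_1,\dots,l_m\}$ because $1/k=\tfrac{1}{k}\sum_i l_i$ belongs to that span, and conversely each $l_j$ is recoverable from the $\bar T$-lengths and the $P$-orbit data on $D_T$.

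Combining everything,
\[
\text{rank}(T)\;=\;\text{rank}(\bar T)\;\leq\;\bar m\;=\;\frac{m-1}{k}+1\;\leq\;\frac{m+1}{2},
\]
while the hypothesis forces $\text{rank}(T)\geq 2+\lfloor m/2\rfloor$, and a direct check gives $2+\lfloor m/2\rfloor>(m+1)/2$ for every $m\geq 1$, a contradiction. The hardest step will be the quotient construction: showing that one may take the fundamental domain to be a single interval and that the resulting piecewise map is a bona fide IET with the stated number of exchanged intervals and matching rank. I expect this ultimately to require a structural result---that any finite-order element of $C(T)$ must, after a suitable change of coordinates compatible with $T$, be the rotation by $1/k$---which is where the rank and i.d.o.c.\ hypotheses should enter most essentially.
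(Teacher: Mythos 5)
Your opening moves (freeness of the $\langle P\rangle$-action via minimality, and the closing arithmetic) are fine and essentially match the paper, but there are two genuine gaps at the heart of the argument. First, the claim that $P$ permutes the discontinuity set $D_T$ of $T$ itself is not justified and is stronger than what commutation gives you. The one-sided-limit comparison you sketch only works when $P$ is continuous at $\gamma_i$, at $T(\gamma_i)$, \emph{and} (from the left) at $\lim_{x\to\gamma_i^-}T(x)$; since $P$ has its own discontinuities, the jump of $T$ at $\gamma_i$ can be cancelled in the composition, and all one gets in general is Novak's lemma: $P$ permutes the $T$-\emph{orbits} of the fundamental discontinuities, i.e.\ $P(\gamma_i)=T^{n_i}(\gamma_{\sigma(i)})$ for some permutation $\sigma$ and integers $n_i$ that need not vanish. (Finite order of $P$ plus the i.d.o.c.\ only forces the sum of the $n_i$ around each $\sigma$-cycle to be zero, not each $n_i=0$.) Your divisibility $k\mid m-1$ and the count $\bar m=(m-1)/k+1$ rest on the stronger, unproved statement; you also ignore the possibly non-fundamental discontinuity $T^{-1}(0)$, which the paper must treat separately and which contributes an extra term to its rank bound.

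Second, the quotient construction is precisely the hard part and you defer it. There is no reason a fixed-point-free finite-order IET admits a single interval as a fundamental domain in a way compatible with $T$, conjugation by an arbitrary IET does not preserve $\mathrm{rank}$, and the ``structural result'' you anticipate needing (that $P$ is, in $T$-compatible coordinates, the rotation by $1/k$) is essentially equivalent to what must be proved. The paper sidesteps both issues: it picks a small interval $K$ with $K,S(K),\dots,S^{q-1}(K)$ disjoint, arranges that $J=\cup_i S^i(K)$ avoids long segments of the orbits of all discontinuities, and studies the first return map $T_J$. Novak's orbit lemma then shows the return partition of $J$ is $S$-invariant, so at most $d_f/q+1$ distinct lengths occur among its $q+d_f$ subintervals; since every interval exchanged by $T$ is a disjoint union of translates of return intervals, this bounds $\mathrm{rank}(T)$ without ever forming an actual quotient. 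To salvage your approach you would need to replace ``$P$ permutes $D_T$'' by the orbit statement and work with an induced map on a well-chosen $P$-invariant union of intervals rather than a literal quotient interval.
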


For $n\in \mathbb{N}$, let $r_n$ denote the periodic IET defined by $r_n(x)= x + 1/n \hspace{2pt}(\text{mod }1)$. Suppose that $T$ is minimal and that $S\in C(T)$ has finite order $n$. Then $S$ is conjugate to $r_n$, and consequently $T$ is conjugate to some element of $C(r_n)$. Novak gives a precise description of $C(r_n)$ \cite[Proposition~5.5]{Novak1}, and it is clear from Novak's description that all of the elements of $C(r_n)$ fail to satisfy the estimate on the rank given in Theorem \ref{thm:torsion rank}. However, this argument does not prove Theorem \ref{thm:torsion rank} since conjugation need not preserve rank (see the \hyperref[sec:App]{Appendix} for an example). Instead of trying to circumvent this difficulty, we will prove Theorem \ref{thm:torsion rank} using a different technique. Incidentally, our proof can be used to show that there actually is a conjugating map from $S$ to $r_n$ which does not alter $\text{rank}(T)$.

The proof of Theorem \ref{thm:torsion rank} will be given in the fifth section of this paper. We will now show how to combine Proposition \ref{prop:finite index} and Theorems \ref{thm:roots rank} and \ref{thm:torsion rank} to establish our main result.

\begin{lem}\label{lem:trivial centralizer}
Let $G$ be a group and let $x\in G$. Suppose that  \begin{enumerate}
\item
$\langle x \rangle$ has finite index in $C(x)$;
\item
$x$ does not have an $n^{\text{th}}$ root in $G$ for any $n\geq 2$;
\item
$C(x)$ is torsion-free.
\end{enumerate}
Then $C(x)=\langle x \rangle$.
\end{lem}
\begin{proof}
Let $y\in C(x)$. By (1), there exists $n\in \mathbb{N}$ such that $y^n \in \langle x \rangle$, say $y^n = x^m$. We will prove that $y \in \langle x \rangle$ by induction on the number of primes dividing $n$. If there are no such primes, then $n=1$ and there is nothing to prove. Suppose that $n\geq 2$. Let $p$ be a prime dividing $n$, say $n=up$. Write $m=qp+r$, where $0\leq r < p$. We claim that $r=0$. If not, then $r$ and $p$ are relatively prime, so there exist integers $k$ and $l$ such that $kr-lp=1$. Then $$y^{upk}=y^{nk}=x^{mk}=x^{kqp + kr}=x^{kqp+lp +1}.$$ Since $x$ commutes with $y$, it follows that $\big(y^{uk}x^{-l-kq}\big)^p=x$. This contradicts (2). So $r=0$ and $m=qp$. We now have $y^{up}=y^n=x^m=x^{qp}$. After rearranging, we find that $\big(y^ux^{-q}\big)^p=e$, where $e$ denotes the identity in $G$. By (3), it must be that $y^ux^{-q}=e$. Therefore, $y^u=x^q$. Since $u = n/p$, it now follows by induction that $y\in \langle x \rangle$.
\end{proof}

\begin{thm}\label{thm:centralizer rank}
Let $T$ be an $m$-IET defined by a separating permutation. Suppose that $T$ satisfies the infinite distinct orbit condition and that rank$(T) > 1 +\lfloor m/2 \rfloor$. Then $C(T)=\langle T \rangle$.
\end{thm}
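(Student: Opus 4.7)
The plan is to combine the three ingredients already available: Corollary 3.1 (so $\langle T\rangle$ has finite index in $C(T)$), Theorem 3.2 (so $T$ has no nontrivial roots), and Theorem 3.3 (so $C(T)$ is torsion-free). Fix an arbitrary $S\in C(T)$; the goal is to show $S\in\langle T\rangle$.

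By Corollary 3.1 there is a positive integer $n$ with $S^n\in\langle T\rangle$, say $S^n=T^k$ for some integer $k$. If $k=0$, then $S^n=\mathrm{id}$, so $S$ is a torsion element of $C(T)$, and Theorem 3.3 forces $S=\mathrm{id}\in\langle T\rangle$. So from now on I may assume $k\neq 0$.

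The next step is to pass to the subgroup $H=\langle S,T\rangle\leq C(T)$, which is abelian precisely because $S\in C(T)$ means $S$ and $T$ commute. Being finitely generated, abelian, and torsion-free (as a subgroup of $C(T)$, by Theorem 3.3), the structure theorem for finitely generated abelian groups gives $H\cong \mathbb{Z}^r$ for some $r\geq 0$. The relation $S^n=T^k$ with both $n,k\neq 0$ makes the images of $S$ and $T$ linearly dependent over $\mathbb{Q}$ in $H\otimes_{\mathbb{Z}}\mathbb{Q}$, so $H$ has rank $1$; that is, $H$ is infinite cyclic. Pick a generator $U$ and write $T=U^{\beta}$ for some integer $\beta$ (with $\beta\neq 0$ since $T$ has infinite order by Theorem 3.3).

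If $|\beta|\geq 2$, then $T=(U^{\epsilon})^{|\beta|}$ for the appropriate sign $\epsilon\in\{\pm 1\}$, which exhibits a nontrivial root of $T$ and contradicts Theorem 3.2. Hence $\beta=\pm 1$, so $U\in\langle T\rangle$ and therefore $S\in H=\langle U\rangle\subseteq\langle T\rangle$. Since $S\in C(T)$ was arbitrary, $C(T)=\langle T\rangle$. I do not expect a significant obstacle beyond this bookkeeping: all the real work is hidden in Theorems 3.2 and 3.3, and the only mildly delicate point is observing that the relation $S^n=T^k$, combined with torsion-freeness and commutativity, collapses $\langle S,T\rangle$ to an infinite cyclic group, after which the no-roots theorem identifies its generator (up to sign) with $T$.
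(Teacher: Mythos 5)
Your proposal is correct, and it rests on exactly the same three pillars as the paper's proof (Corollary 3.1 for finite index, Theorem 3.3 for torsion-freeness, Theorem 3.2 for the absence of roots); the difference is purely in how the final group-theoretic bookkeeping is carried out. The paper proceeds by induction on the exponent $n$ in the relation $S^n=T^k$: it picks a prime $p$ dividing $n$, uses a Bezout-type congruence argument to show $p$ must divide $k$ (else one manufactures a $p$-th root of $T$, contradicting Theorem 3.2), then extracts the relation $(S^{n/p}T^{-k/p})^p=I$ and kills it with Theorem 3.3, reducing the exponent. You instead observe that $H=\langle S,T\rangle$ is a finitely generated, torsion-free abelian subgroup of $C(T)$, hence free abelian; the relation $S^n=T^k$ with $n,k\neq 0$ forces $H$ to have rank one, so $H$ is infinite cyclic with some generator $U$, and Theorem 3.2 forces $T=U^{\pm 1}$, whence $S\in H=\langle T\rangle$. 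Your version is cleaner and avoids the induction and the modular arithmetic, at the cost of invoking the structure theorem for finitely generated abelian groups; the paper's version is more elementary and self-contained. One small point worth making explicit in your write-up: the hypotheses of Theorem 3.4 force $m\geq 3$ (since $\operatorname{rank}(T)\leq m$ and $\operatorname{rank}(T)>1+\lfloor m/2\rfloor$ is impossible for $m\leq 2$), which is needed both to apply Corollary 3.1 and, via Keane's theorem, to get the minimality required by Theorem 3.2. The paper leaves this implicit as well.
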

\begin{proof}
Recall that any IET which satisfies the IDOC is minimal. Notice also that we must have $m\geq 3$, since otherwise the inequality rank$(T) > 1 +\lfloor m/2 \rfloor$ could not possibly be true. The hypotheses of Proposition \ref{prop:finite index} and Theorems \ref{thm:roots rank} and \ref{thm:torsion rank} are all satisfied. The claim now follows from Lemma \ref{lem:trivial centralizer}.  
\end{proof}

Recall that a permutation $\pi \in S_m$ is said to be \textit{irreducible} if $\pi(\lbrace 1,2,\dots,k \rbrace) \neq \lbrace 1,2,\dots,k \rbrace$ for any $k<m$. A result of Keane asserts that if $\pi\in S_m$ is irreducible and the coordinates of $\lambda\in \Delta_m$ are linearly independent over $\mathbb{Q}$, then the IET $T_{(\pi,\lambda)}$ satisfies the IDOC \cite{Keane1}. We will also have $\text{rank}(T_{(\pi,\lambda)})=m$ in this case, provided that $\pi$ is separating. Combining these observations with Theorem \ref{thm:centralizer rank} proves the following result, which justifies the statement that ``most" IETs have trivial centralizers in $\mathbb{G}$. 

\begin{cor}
Let $m\geq 3$. Let $\pi \in S_m$ be separating and irreducible. Let $$A=\big\lbrace \lambda \in \Delta_m : C(T_{(\pi,\lambda)})=\langle T_{(\pi,\lambda)} \rangle \big\rbrace.$$ Then $A$ is a residual subset of $\Delta_m$ of full Lebesgue measure.
\end{cor}

\noindent For completeness, we explain why Theorem \ref{thm:centralizers general} follows from Theorem \ref{thm:centralizer rank}.\vspace{5pt}

\noindent \textit{Proof of Theorem \ref{thm:centralizers general}}. Let $T$ be a minimal IET which is not of rotation type. Suppose that the lengths of the exchanged subintervals are linearly independent over $\mathbb{Q}$. Let $m-1$ be the number of discontinuities of $T$. The assumption that $T$ is not of rotation type implies that $T$ has at least two discontinuities. So $m\geq 3$. Choose $\pi \in S_m$ and $\lambda\in \Delta_m$ according to Proposition \ref{prop:separating existence}. The assumption that the lengths of the exchanged subintervals are linearly independent over $\mathbb{Q}$ implies that the dimension of the $\mathbb{Q}$-vector space spanned by $\lambda_1,\lambda_2,\dots,\lambda_m$ is $m$. Since $T$ is minimal, it is clear that $\pi$ is irreducible. Therefore $T$ satisfies the IDOC. Moreover, since $\pi$ is separating, $\text{rank}(T)=m$. Since $m\geq 3$, $\text{rank}(T)> 1 + \lfloor m/2 \rfloor$, so Theorem \ref{thm:centralizer rank} implies that $C(T)=\langle T \rangle$. \qed
\vspace{5pt}

We now turn our attention to Theorem \ref{thm:centralizers idoc}. Our proof will follow the same strategy as our proof of Theorem \ref{thm:centralizer rank}. In place of Theorem \ref{thm:roots rank}, we will use following result, which was established by the author \cite[Theorem~1.5]{Bernazzani1}. 

\begin{thm}\label{thm:roots idoc}
Let $T$ be a minimal 3-IET which is not of rotation type. Then $T$ has an $n^{\text{th}}$ root in $\mathbb{G}$ for some $n\geq 2$ if and only if $T$ fails to satisfy the infinite distinct orbit condition.
\end{thm}

\noindent In place of Theorem \ref{thm:torsion rank}, we will use the following result. 

\begin{thm}\label{thm:torsion idoc}
Let $T$ be a 3-IET which satisfies the infinite distinct orbit condition and which is not of rotation type. Then $C(T)$ is torsion-free.
\end{thm}

The proof of Theorem \ref{thm:torsion idoc} will be given in the sixth section of this paper. We now proceed with the proof of Theorem \ref{thm:centralizers idoc}.\vspace{5pt}

\noindent \textit{Proof of Theorem \ref{thm:centralizers idoc}}. Let $T$ be a minimal 3-IET  which is not of rotation type. 

Suppose that $T$ satisfies the IDOC. The assumption that $T$ is not of rotation type implies that $T$ must be defined by the permutation $(321)$, which is separating. Thus the hypotheses of Proposition \ref{prop:finite index} and Theorems \ref{thm:roots idoc} and \ref{thm:torsion idoc} are all satisfied. An application of Lemma \ref{lem:trivial centralizer} shows that $C(T)=\langle T \rangle$.

Conversely, suppose that $T$ does not satisfy the IDOC. By Theorem \ref{thm:roots idoc}, there exists $S\in \mathbb{G}$ and $n\geq 2$ such that $T=S^n$. Since $T$ has infinite order in $\mathbb{G}$, it is clear that $S\not\in \langle T \rangle$. Therefore $C(T)\neq \langle T \rangle$.\qed
\vspace{5pt}

We remark that if $T$ is a minimal $3$-IET which is not of rotation type and which does not satisfy the IDOC, then Theorems 1.7 and 1.8 of \cite{Bernazzani1} can be used to show that $C(T)$ is actually uncountable and contains a subgroup isomorphic to the circle. We omit the details. 

The main results of this paper have been established, but the proofs of Theorems \ref{thm:torsion rank} and \ref{thm:torsion idoc} are still pending. The rest of this paper will be devoted to proving these two results. 

\section{The First Return Map}

In this section we briefly review some facts about first return maps which will be used in our proofs of Theorems \ref{thm:torsion rank} and \ref{thm:torsion idoc}. Given an IET $T$, let $D(T)$ denote the set of points at which $T$ is discontinuous. 
 
Let $\mathcal{A}$ denote the set algebra consisting of finite unions of half-open intervals $[a,b)$ contained in $[0,1)$. If $J\in \mathcal{A}$, let $E(J)$ denote the set of endpoints of the connected components of $J$. For example, if $J=[0,\frac{1}{3})\cup [\frac{4}{5},\frac{5}{6})$, then $E(J)=\lbrace 0,\frac{1}{3},\frac{4}{5},\frac{5}{6} \rbrace$. 

It is well-known that if $T$ is an IET and $J\in \mathcal{A}$ then the first return map to $J$ is essentially an IET. The following lemma is a precise formulation of this fact.  

\begin{lem}\label{lem:first return}
Let $T$ be an IET. Suppose that $J\in \mathcal{A}$ and that no point in $D(T)$ belong to the interior of $J$. Let $P\subseteq J$ consist of those points $x$ in the interior of $J$ for which there exists an $n\geq 1$ such that $T^j(x)\notin J \cup D(T) \cup E(J)$ for $0<j<n$ and $T^n(x)\in D(T) \cup E(J).$ The set $P$ is finite. Therefore $P$ partitions $J$ into finitely many subintervals
$J_1,J_2,\dots,J_k$. There exist positive integers $m_1,m_2,\dots,m_k$ such that 
\begin{enumerate}[(1)]
\item
for each $i$, the restriction of $T^j$ to $J_i$ is a translation for $1\leq j \leq m_i$;
\item
for each $i$, $T^j(J_i)\cap J = \emptyset$ for $0<j<m_i$;
\item
for each $i$, $T^{m_i}$ translates $J_i$ onto a subinterval of $J$;
\item
the intervals $T^{m_i}(J_i)$, $1\leq i \leq k$, are pairwise disjoint.
\end{enumerate} 
\end{lem}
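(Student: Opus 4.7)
The plan is to verify the four conclusions by tracking forward orbits of points in $J$ until they first re-encounter the finite ``bad set'' $D(T) \cup E(J) \cup J$. Throughout I would use that $T$ is a bijection of $[0,1)$ with finitely many discontinuities and that $E(J) \cap D(T) = \emptyset$ by hypothesis. The central observation is that a point $x$ in the interior of $J$ fails to have uniform first-return behavior precisely when its forward orbit meets $D(T) \cup E(J)$ before re-entering $J$; this is exactly the defining condition of $P$.

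For finiteness of $P$, I would show that each $y \in D(T) \cup E(J)$ contributes at most one element to $P$. Given such a $y$, let $n(y) \ge 1$ be minimal (if it exists) with $T^{-n(y)}(y)$ in the interior of $J$ and $T^{-k}(y) \notin J \cup D(T) \cup E(J)$ for $0 < k < n(y)$; then $T^{-n(y)}(y)$ is the only candidate in $P$ produced by $y$. Conversely, every $x \in P$ arises from exactly one such $y$ via injectivity of $T$ and minimality of $n(y)$, so $|P| \le |D(T)| + |E(J)|$. Since $P \cup E(J)$ is therefore finite, it partitions $J$ into finitely many half-open subintervals $J_1, \ldots, J_k$.

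For (1)--(3), I would fix an interior point $x_0 \in J_i$; since $x_0 \notin P$, its forward orbit must return to $J$ strictly before striking $D(T) \cup E(J)$, and I let $m_i$ be this first-return time. Because $T, T^2, \ldots, T^{m_i - 1}$ are continuous at $x_0$, an open neighborhood of $x_0$ in $J_i$ shares the same first-return behavior with $T^j$ acting as a translation for $1 \le j \le m_i$. I would then extend this neighborhood to all of $J_i$ by a connectedness/maximality argument: if the set of points exhibiting this uniform behavior failed to exhaust the interior of $J_i$, a boundary point inside $J_i$ would either have its orbit strike $D(T) \cup E(J)$ before returning to $J$, producing an element of $P$ interior to $J_i$, or land on $E(J)$ at its return, again forcing membership in $P$. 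Either outcome contradicts the choice of partition.

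Finally, for (4), if $T^{m_i}(J_i) \cap T^{m_j}(J_j) \ne \emptyset$ for some $i \ne j$ with $m_i \le m_j$, then equality of return times would force $J_i \cap J_j \ne \emptyset$ by injectivity, while $m_i < m_j$ would produce a point of $J_j$ re-entering $J$ at the positive time $m_j - m_i < m_j$, contradicting that $m_j$ is its first return. The main obstacle I anticipate is the connectedness argument in the previous step, together with the bookkeeping about which endpoints of $J$ actually lie in $J$ and which do not; once that technicality is handled carefully, the remaining items follow essentially by unwinding the definitions and invoking injectivity of $T$.
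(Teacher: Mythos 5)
The paper does not actually prove this lemma; it states it and cites Section 4 of Viana's survey, so there is no internal proof to compare against, and your outline has to stand on its own. Your finiteness argument for $P$ is correct (pulling each $y\in D(T)\cup E(J)$ back along its orbit to its last visit to the interior of $J$ does give an injection of $P$ into $D(T)\cup E(J)$), and your argument for (4) is fine. But there is a genuine gap at the start of your treatment of (1)--(3): from $x_0\notin P$ you conclude that ``its forward orbit must return to $J$ strictly before striking $D(T)\cup E(J)$.'' The negation of membership in $P$ only says that the orbit does not strike $D(T)\cup E(J)$ before re-entering $J$; it leaves open the possibility that the forward orbit of $x_0$ never meets $J\cup D(T)\cup E(J)$ at all, in which case there is no return time $m_i$. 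Ruling this out is the real content of the lemma and needs a separate argument: if $T^n(\operatorname{int}(J_i))$ avoided $J\cup D(T)\cup E(J)$ for every $n\geq 1$, then by induction each $T^n$ would be a translation on $\operatorname{int}(J_i)$, and the sets $T^n(\operatorname{int}(J_i))$, $n\geq 0$, would be pairwise disjoint (a coincidence $T^n(x)=T^m(y)$ with $n<m$ forces $T^{m-n}(y)\in J$), each of length $|J_i|>0$, which is impossible in $[0,1)$. Poincar\'e recurrence alone gives only almost-every-point recurrence, which is not enough here. This pigeonhole step is what makes $m_i$ well defined, and it must come before the connectedness argument you identify as the main obstacle.

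A smaller point: your claim that $T,T^2,\dots,T^{m_i-1}$ are continuous at $x_0$ requires $T^j(x_0)\notin D(T)$ for $0\leq j<m_i$, whereas $x_0\notin P$ only controls $j\geq 1$. You are implicitly assuming $D(T)\cap\operatorname{int}(J)\subseteq P$, which does not follow from the definition as written: a discontinuity of $T$ lying in the interior of $J$ whose forward orbit returns to $J$ without meeting $D(T)\cup E(J)$ is not in $P$ (this already happens for an irrational rotation with $J$ an interval straddling the discontinuity), and then $T$ fails to be a translation on the $J_i$ containing it. In the paper's application this is harmless because $J$ is arranged to satisfy $J\cap D(T)=\emptyset$ before the lemma is invoked, but a self-contained proof should either impose that hypothesis or enlarge $P$ to include $D(T)\cap\operatorname{int}(J)$.
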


\noindent For a proof of this result, see e.g. \cite[Lemma~4.2]{Viana}.

\begin{defi}
Let $T$ be an IET. Suppose that $J\in \mathcal{A}$ satisfies the hypothesis of Lemma \ref{lem:first return}. For each $x\in J$, let $n_J(x)= \inf \lbrace n \geq 1 :T^n(x)\in J \rbrace$. According to the lemma, $n_J(x)=m_i$ for $x\in J_i$. The map $T_J:J \rightarrow J$ defined by $T_J(x)=T^{n_J(x)}(x)$ is the \textit{first return map} to $J$. The integers $m_1,m_2,\dots,m_k$ are  the \textit{return times}. 
\end{defi}

For convenience, we will sometimes refer to first return maps as IETs, even though this is technically inconsistent with Definition \ref{defi:iet}, since the domain is not necessarily $[0,1)$.

\section{Fundamental Discontinuities}

In this section we use a result of Novak to derive some facts which will be used in our proofs of Theorems \ref{thm:torsion rank} and \ref{thm:torsion idoc}. 

Let $T$ be a minimal IET. Let $p\in D(T)$. Since $\mathcal{O}_T(p)$ is infinite, there  exists a smallest positive integer $N_p$ such that $T^{n}(p) \not\in D(T) \cup \lbrace 0 \rbrace$ for $n\geq N_p$. If $T^{N_p}$ is continuous at $p$, then all of the iterates $T^n$ for $n\geq N_p$ are continuous at $p$. On the other hand, if $T^{N_p}$ is discontinuous at $p$, then all of the iterates $T^n$ for $n\geq N_p$ are discontinuous at $p$. This dichotomy played an important role in Novak's study of the discontinuity growth rate of IETs \cite{Novak1}. 

\begin{defi}\label{defi:fundamental discontinuity}
Let $T$ be a minimal IET. We will say that $p\in D(T)$ is a \textit{fundamental discontinuity} if $T^n(p)\not\in D(T)$ for all negative $n$ and $T^n$ is discontinuous at $p$ for all sufficiently large positive $n$. We will denote the set of fundamental discontinuities by $D_f(T)$.
\end{defi}

We emphasize that Definition \ref{defi:fundamental discontinuity} implies that distinct fundamental discontinuities belong to different $T$-orbits. The following result is due to Novak. It is not stated as a separate lemma in \cite{Novak1}, but it is established in the proof of Proposition 5.3 in \cite{Novak1}.

\begin{lem}\label{lem:permute orbits}
(Novak) Suppose that $T$ is a minimal IET and that $S\in C(T)$. Then $S$ permutes the orbits of the fundamental discontinuities of $T$. More precisely, for each $x\in D_f(T)$, there exists a unique $y\in D_f(T)$ such that $S(\mathcal{O}_T(x))=\mathcal{O}_T(y)$.
\end{lem}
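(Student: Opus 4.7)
The plan is to exploit the commutation $ST^n = T^n S$ to transport the fundamental-discontinuity property from $p$ to some point $q \in \mathcal{O}_T(S(p))$. The identity immediately yields $S(\mathcal{O}_T(p)) = \mathcal{O}_T(S(p))$, reducing the statement to the claim that $\mathcal{O}_T(S(p))$ contains a fundamental discontinuity of $T$.

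First I would show $\mathcal{O}_T(S(p)) \cap D(T) \neq \emptyset$. Because $p \in D_f(T)$, the composition $T^n$ is discontinuous at $p$ for all sufficiently large $n$. Using that $D(S)$ is finite while $\mathcal{O}_T(p)$ is infinite, one may arrange $S$ to be continuous at both $p$ and at $T^n(p)$ (replacing $p$ by a suitable forward iterate $T^k(p)$ if needed, which is harmless because $\mathcal{O}_T(p) = \mathcal{O}_T(T^k(p))$). Under this arrangement, a jump of $S \circ T^n$ at $p$ coming from the factor $T^n$ must, on the right-hand side of $S \circ T^n = T^n \circ S$, come from $T^n$ evaluated at $S(p)$. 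Hence $T^n$ is discontinuous at $S(p)$ for all sufficiently large $n$, and since $D(T^n) \subseteq \bigcup_{j=0}^{n-1} T^{-j}(D(T))$, some iterate $T^j(S(p))$ lies in $D(T)$.

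Next, let $n_0$ be the smallest integer with $T^{n_0}(S(p)) \in D(T)$; this exists because $\mathcal{O}_T(S(p)) \cap D(T)$ is finite and nonempty. Set $q := T^{n_0}(S(p))$. Minimality of $n_0$ immediately gives $T^{-j}(q) \notin D(T)$ for every $j \geq 1$, which is the backward-orbit part of the definition of $D_f(T)$. For the forward part I would invoke the Novak dichotomy recalled just before Definition 4.2: since $q\in D(T)$ has backward orbit avoiding $D(T)$, either $T^m$ is discontinuous at $q$ for all sufficiently large $m$, in which case $q\in D_f(T)$ and we are done, or $T^m$ is eventually continuous at $q$. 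The latter possibility can be ruled out by translating through the identity $q = T^{n_0}(S(p))$: it would force $T^n$ to be eventually continuous at $S(p)$, contradicting the conclusion of the previous step.

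The main obstacle is the bookkeeping in the second paragraph. The commutation identity equates two IETs as functions, but to push a discontinuity of $T^n$ at $p$ across to one at $S(p)$ requires controlling how $S$ interacts with the discontinuity set of $T^n$; both $S$ and $T^n$ contribute discontinuities to the composition, and in principle cancellations must be ruled out. I expect to handle this through the standard finite-versus-infinite argument already indicated, shifting $p$ along its $T$-orbit to avoid the finite set $D(S)$ so that the commutation identity can be read off cleanly on both sides.
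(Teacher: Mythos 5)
The paper does not prove this lemma itself --- it is quoted from Novak's Proposition 5.3 --- so there is no internal proof to compare against; I am judging your argument on its own terms. Your overall architecture (reduce to showing $\mathcal{O}_T(S(p))$ contains a fundamental discontinuity, push the eventual discontinuity of $T^n$ through $S\circ T^n=T^n\circ S$, then take the first point of $D(T)$ on the orbit) is reasonable, but the second paragraph contains a genuine error. Replacing $p$ by a \emph{forward} iterate $T^k(p)$ is not harmless: for $k\geq N_p$ the forward orbit of $T^k(p)$ avoids $D(T)\cup\{0\}$ entirely, so $T^n$ is a translation on a neighborhood of $T^k(p)$ and is therefore \emph{continuous} there for every $n$. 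The property ``$T^n$ is discontinuous at this point for all large $n$'' is exactly what your argument needs, and it is destroyed by the substitution. The property does persist along the \emph{backward} orbit (since the backward orbit of a fundamental discontinuity avoids $D(T)$, $T^k$ is a local translation carrying a neighborhood of $T^{-k}(p)$ to a neighborhood of $p$), and the backward orbit is infinite while $D(S)$ is finite, so the fix is to shift backward instead.

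A second problem is that arranging $S$ to be continuous at $T^n(p)$ does not rule out the cancellation you flag in your last paragraph. Writing $L_n=\lim_{y\to p^-}T^n(y)$ and $R_n=T^n(p)$, the composition $S\circ T^n$ is continuous at $p$ precisely when $\lim_{z\to L_n^-}S(z)=S(R_n)$, i.e.\ when $S$ glues a left-neighborhood of $L_n$ onto a left-neighborhood of $S(R_n)$. The relevant points are the left limits $L_n$, which do not lie on $\mathcal{O}_T(p)$, so shifting $p$ along its orbit to dodge $D(S)$ does not address this. (The cancellation can still be excluded by the same finite-versus-infinite device --- the $L_n$ are eventually pairwise distinct, so $L_n\in D(S)$ for only finitely many $n$, and once $L_n\notin D(S)$ injectivity of $S$ gives $S(L_n)\neq S(R_n)$ --- but your proposal aims the finiteness argument at the wrong set.) Finally, in the last step the implication ``$T^m$ eventually continuous at $q$ forces $T^n$ eventually continuous at $S(p)$'' is only clear when $q$ lies forward of $S(p)$ on the orbit ($n_0\geq 0$); when $n_0<0$, so that $S(p)$ is a forward iterate of $q$, the segment of orbit between $q$ and $S(p)$ may meet $D(T)\cup\{0\}$ and the transport of continuity breaks down. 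None of these is fatal to the strategy, but as written the proof does not go through.
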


\begin{cor} \label{cor:cyclic group action}
Suppose that $T$ is a minimal IET and that $S\in C(T)$ has finite order $q$. Then $S$ induces a $\mathbb{Z}/q\mathbb{Z}$ action on the set $D_f(T)$. Specifically, the generator of $\mathbb{Z}/q\mathbb{Z}$ acts on $D_f(T)$ by $x\mapsto y$ where $y$ is the unique element of $D_f(T)$ with the property that $S(\mathcal{O}_T(x))=\mathcal{O}_T(y)$.
\end{cor}

It will be useful to know that each of the orbits of the $\mathbb{Z}/q\mathbb{Z}$ action described in Corollary \ref{cor:cyclic group action} has size $q$. We will prove this, but first we need the following lemma.

\begin{lem}\label{lem:fixed point centralizer}
Let $T$ be a minimal IET and let $S\in C(T)$. If $S$ has a fixed point, then $S$ is the identity map.
\end{lem}
\begin{proof}
Let $F$ be the set of points in $[0,1)$ which are fixed by $S$. The definition of an IET implies that $F$ is a finite union of half-open intervals. Since $T$ commutes with $S$, the set $F$ must be $T$-invariant. Since $T$ is minimal, $F=[0,1)$.
\end{proof}

\begin{cor}\label{cor:orbit size q}
Let $T$ be a minimal IET and let $S\in C(T)$. If $S$ has finite order $q$, then each of the orbits of the $\mathbb{Z}/q\mathbb{Z}$ action described in Corollary \ref{cor:cyclic group action} has size $q$.
\end{cor}
\begin{proof}
Let $x\in D_f(T)$ and suppose that the orbit of $x$ under the $\mathbb{Z}/q\mathbb{Z}$ action has size $u$, where $1\leq u<q$. Then $S^u(\mathcal{O}_T(x))=\mathcal{O}_T(x)$, so $S^u(x)=T^k(x)$ for some $k$. The map $S^uT^{-k}$ fixes $x$, so Lemma \ref{lem:fixed point centralizer} implies that $S^uT^{-k}=I$, where $I$ denotes the identity map. So $S^u=T^k$. Since $T$ has infinite order and $S^u$ has finite order, it must be that $k=0$. But then $S^u=I$, contrary to the fact that $S$ has order $q$. 
\end{proof}

The $\mathbb{Z}/q\mathbb{Z}$ action described above will play an important role in our proofs of Theorems \ref{thm:torsion rank} and \ref{thm:torsion idoc}. We will also use the following fact.

\begin{lem}\label{lem:idoc fundamental discontinuities}
Suppose that $T$ satisfies the IDOC. Then every discontinuity of $T$, with the possible exception of $T^{-1}(0)$, is a fundamental discontinuity.
\end{lem}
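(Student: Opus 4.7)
The plan is to verify both clauses of Definition 4.3 directly for an arbitrary $p \in D(T) \setminus \{T^{-1}(0)\}$. By Proposition 2.1 I would first pass to the admissible representation of $T$, so that $D(T) = \{\beta_1, \dots, \beta_{m-1}\}$; write $p = \beta_i$. The backward clause is then immediate from the two hypotheses in the i.d.o.c.: if $T^n(p) = \beta_j$ for some $n < 0$, then either $j = i$, in which case $\mathcal O_T(p)$ is finite (contradicting the infinite-orbit clause), or $j \neq i$, in which case $\mathcal O_T(\beta_i) = \mathcal O_T(\beta_j)$ (contradicting the disjointness clause).

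For the forward clause the strategy is to show that the integer $N_p$ defined before Definition 4.3 equals $1$, after which the dichotomy recorded there, together with the trivial fact that $T = T^{N_p}$ is discontinuous at $p \in D(T)$, yields that $T^n$ is discontinuous at $p$ for every $n \geq 1$. The same orbit argument as above shows $T^n(p) \notin D(T)$ for every $n \geq 1$, so the remaining content of $N_p = 1$ is the claim that $T^n(p) \neq 0$ for $n \geq 1$. If $T^n(p) = 0$ then $T^{n-1}(p) = T^{-1}(0)$; the case $n = 1$ is excluded by the assumption $p \neq T^{-1}(0)$, and to dispose of $n \geq 2$ I would use that $T^{-1}(0) \in D(T)$ so that the relation $T^{-1}(0) \in \mathcal O_T(p) \setminus \{p\}$ again violates the disjoint-orbit condition.

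The only step that is not a formal orbit-counting argument is therefore the verification that $T^{-1}(0)$ really is a discontinuity of $T$. Here I would invoke Keane's theorem: i.d.o.c.\ implies $T$ is minimal, and if $\pi(1) = 1$ then $T$ fixes $I_1$ pointwise, contradicting minimality; hence $\pi(1) \neq 1$ and $T^{-1}(0) = \beta_{j-1}$ for some $j \geq 2$, which lies in $D(T)$. This is the one place in the proof where the admissibility convention and minimality enter in an essential way, and it also explains why the statement of the lemma must single out $T^{-1}(0)$ as the one potential exception.
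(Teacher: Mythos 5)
Your proof is correct and follows essentially the same route as the paper's: the disjoint-orbit clause of the i.d.o.c.\ rules out $T^n(p)\in D(T)$ for all $n\neq 0$, and the discontinuity of $T$ at $p$ then propagates to every iterate $T^n$, $n\geq 1$ (you do this by showing $N_p=1$ and citing the dichotomy, while the paper re-derives that dichotomy inline by noting $T^{n-1}$ is a translation near $T(p)$). You are in fact somewhat more careful than the paper, which only remarks that $T(p)$ lies in the interior of $[0,1)$, whereas you explicitly check that $T^n(p)\neq 0$ for all $n\geq 1$ and that $T^{-1}(0)\in D(T)$ --- the details that explain why $T^{-1}(0)$ is the one exception that must be singled out.
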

\begin{proof}
Let $p\in D(T)$ and suppose that $p\neq T^{-1}(0)$. Since $T$ satisfies the IDOC, $T^n(p)\not\in D(T)$ for all $n<0$. Let $n\geq 1$. Since $T$ satisfies the IDOC, $T$ is continuous at all points in the forward orbit of $p$. Since $T(p)$ is contained in the interior of $[0,1)$, it follows that $T^{n-1}$ is a translation on some open interval containing $T(p)$. Combining this with the fact that $T$ is discontinuous at $p$, it follows that $T^n$ is discontinuous at $p$. Since this is true for all $n\geq 1$, $p$ is a fundamental discontinuity of $T$.
\end{proof}
 
\section{Proof of Theorem \ref{thm:torsion rank}}

In this section we will prove Theorem \ref{thm:torsion rank}. It is convenient to introduce the following notation. Let $T$ be an IET. Given $x\in [0,1)$ and integers $n\leq m$, let $$\mathcal{O}_T(x,n,m) = \lbrace T^j(x) : n\leq j\leq m \rbrace.$$ Each set $\mathcal{O}_T(x,n,m)$ is just a finite part of the $T$-orbit of $x$. Notice that if $S$ is an IET which commutes with $T$, then $S(\mathcal{O}_T(x,n,m))= \mathcal{O}_T(S(x),n,m)$.

\begin{thm}\label{thm:rank estimate}
Suppose that $T$ is a minimal IET. Let $d_f$ denote the number of fundamental discontinuities. Let $d_{nf}$ denote the number of discontinuities which are not fundamental. If $S\in C(T)$ has finite order $q$, then 
$$\text{rank}(T)\leq \dfrac{d_f}{q} + d_{nf} + 1.$$
\end{thm}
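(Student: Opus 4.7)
For part (1), combine Lemmas 4.1 and 4.3. By Lemma 4.3, $S$ permutes the $T$-orbits of $D_f(T)$. Distinct fundamental discontinuities lie in distinct $T$-orbits: if $p_1 = T^k(p_2)$ with $k > 0$ and $p_1, p_2 \in D_f(T)$, then $T^{-k}(p_1) = p_2 \in D(T)$, contradicting the defining property of $p_1$. So there are exactly $d_f$ such orbits. The $\langle S \rangle$-action on them is free when $q \geq 2$: if $S^j$ (with $1 \leq j \leq q-1$) stabilized some $\mathcal{O}_T(p)$, one would have $S^j(p) = T^k(p)$, so that $T^{-k} S^j \in C(T)$ would fix $p$; Lemma 4.1 then forces $T^{-k} S^j = I$, i.e., $S^j = T^k$. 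Since $T$ is aperiodic (by minimality), $T^k$ has infinite order unless $k = 0$, contradicting $S^j \neq I$. Hence $q \mid d_f$.

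For part (2), I would pursue the first return map strategy sketched in the preamble. Choose a small interval $K$ with $K, SK, \dots, S^{q-1}K$ pairwise disjoint and $E(J) \cap D(T) = \emptyset$, where $J := \bigsqcup_{i=0}^{q-1} S^i K$. By Lemma 4.1, $T_J$ partitions $J$ into intervals $J_1, \dots, J_k$ of lengths $l_1, \dots, l_k$. Because the return time $n_J$ is $S$-invariant, $S|_J$ commutes with $T_J$; and since $S$ acts by a single translation on each $K_l$, cyclically sending $K_l$ to $K_{l+1}$, $S$ must freely permute the $J_j$, giving $k = qM$ with at most $M$ distinct values among the $l_j$. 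Using the Rokhlin tower decomposition $[0,1) = \bigsqcup_{j,s} T^s(J_j)$ from Lemma 4.1, each exchanged interval of $T$ is a union of tower levels, so each length $\lambda_i$ is a non-negative integer combination of the $l_j$. This gives $\text{rank}(T) \leq \dim_\mathbb{Q} \text{span}(l_1, \dots, l_k) \leq M$.

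The remaining task is to show $M \leq d_f/q + d_{nf} + 1$. Introduce the injective ``first exit'' map $\phi : P \to D(T) \cup E(J)$, where $P$ is the set of $|P| = q(M-1)$ interior partition points of $T_J$. The idea is that each $\langle S \rangle$-orbit of $T$-orbits in $D_f(T)$ accounts for exactly one $S$-orbit in $P$, whose $\phi$-image hits one fundamental per $T$-orbit, contributing $d_f/q$ of the $M-1$ total $S$-orbits; the non-fundamentals and the endpoints of $J$ together should account for the remaining $d_{nf}$.

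The main obstacle is this last counting step. A direct injective count yields $|P| \leq d_f + d_{nf} + |E(J)| = d_f + d_{nf} + 2q$, hence only $M \leq d_f/q + d_{nf}/q + 3$, which is too weak precisely in the regime the application needs, since Lemma 4.4 gives $d_{nf} \leq 1$ under the i.d.o.c. hypothesis. To obtain the sharp bound, I expect one needs to select $K$ so that the endpoints of $J$ lie on backward $T$-orbits of non-fundamental discontinuities (when such exist), so that preimages in $P$ of points of $E(J)$ are absorbed into the $d_{nf}$ count rather than contributing separately. The case $d_{nf} = 0$ looks especially delicate, and may require exploiting the admissible permutation hypothesis or the special role of $T^{-1}(0)$.
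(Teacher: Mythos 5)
Your proof of part (1) is correct and is essentially the paper's argument: fundamental discontinuities lie in distinct $T$-orbits, $S$ permutes those orbits, and the action is free because a stabilizer would produce an element $T^{-k}S^j$ of $C(T)$ with a fixed point, hence equal to $I$, forcing $S^j=T^k$ and a contradiction with the orders. For part (2), your framework (first return to the $S$-invariant set $J=\bigsqcup_i S^iK$, rank bounded by the number of distinct lengths among the $J_j$ via the tower decomposition, $S$-symmetry cutting that number by a factor of $q$) is also the paper's. But the gap you flagged at the end is real and is the heart of the proof: you need $|P|=d_f$ exactly (when $d_{nf}=0$), not merely $|P|\leq d_f+d_{nf}+|E(J)|$, and nothing in your write-up forces the $2q$ endpoint contributions to vanish. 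Note also that your claim that $S$ permutes the $J_j$ does not follow just from $S$ commuting with $T_J$: the partition in Lemma 4.1 is generally finer than the continuity partition of $T_J$ (it is cut by every visit to $D(T)\cup E(J)$, whether or not the return map is actually discontinuous there), so the $S$-invariance of $P$ itself has to be proved.

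The paper closes both gaps with one construction, and it is not the one you guessed. Fix representatives $x_1,\dots,x_k$ of the $g$-orbits in $D_f(T)$, record the integers $n_{i,j}$ with $S(g^j(x_i))=T^{n_{i,j}}(g^{j+1}(x_i))$, and choose $N$ larger than $1+\sum_{i,j}|n_{i,j}|$. Then shrink $K$ so that $J$ avoids the orbit segments $\{T^j(x_i): -N\leq j\leq N\}$, and --- this is the key point --- so that the endpoints of the components of $J$ are precisely the points $S^i(T^{M_1}(x_1))$ and $S^i(T^{M_2}(x_1))$, where $M_1<-N$ and $M_2>N$ are the nearest backward and forward visits of the orbit of $x_1$ to $J$. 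Thus $E(J)$ sits on the $T$-orbits of the \emph{fundamental} discontinuities $g^i(x_1)$ (not, as you proposed, on backward orbits of non-fundamental ones), and so deep in those orbits that hitting $E(J)$ is subsumed by hitting the orbit of a fundamental discontinuity at the moment of return to $J$. With this choice every point of $P$ is the first backward-orbit point in the interior of $J$ of a unique fundamental discontinuity and conversely, giving $|P|=d_f$; moreover the choice of $N$ guarantees that the $S$-translates of the orbit segments still contain the discontinuities $g^v(x_i)$ and still avoid $J$, which is what makes $P$ genuinely $S$-invariant. Then $P\cup E(J)$ cuts $J$ into $q+d_f$ intervals falling into $S$-orbits of size $q$, yielding at most $d_f/q+1$ distinct lengths; each non-fundamental discontinuity contributes at most one extra point of $P$ and hence at most one to the rank, giving the stated bound. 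Your instinct that the $d_{nf}=0$ case needs special care is backwards: in the paper it is the clean case, and the non-fundamental discontinuities are handled by a final perturbation of the count.
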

\begin{proof}
For clarity, suppose first that every discontinuity of $T$ is fundamental. Consider the $\mathbb{Z}/q\mathbb{Z}$-action on $D_f(T)$ described in Corollary \ref{cor:cyclic group action}. According to Corollary \ref{cor:orbit size q}, the orbits of this action have size $q$. Let $x_1,x_2,\dots,x_{k}$ be representatives for the distinct $\mathbb{Z}/q\mathbb{Z}$-orbits in $D_f(T)=D(T)$. 

Using Lemma \ref{lem:fixed point centralizer}, it is not hard to see that every $S$-orbit has size $q$. Therefore there exists a subinterval $K=[a,b)\subseteq [0,1)$ such that the sets $S^i(K)$ for $0\leq i \leq q-1$ are pairwise disjoint and the restriction of $S$ to each of these sets is a translation. Notice that the set $J=\bigcup_{i=0}^{q-1}S^i(K)$ is $S$-invariant. We claim that if $K$ is chosen appropriately, then $J$ will not contain any of the discontinuities of $T$. 

In order to see this, let $N$ be a positive integer which will be specified later. By shrinking $K$ if necessary, we can assume that none of the points in the set $\bigcup_{i=1}^k \mathcal{O}_T(x_i,-N,N)$ belong to $J$. Since $J$ is $S$-invariant, it follows that none of the points in $\bigcup_{j=0}^{q-1} \bigcup_{i=1}^k S^j(\mathcal{O}_T(x_i,-N,N))$ belong to $J$. Each of the sets $S^j(\mathcal{O}_T(x_i,-N,N))=\mathcal{O}_T(S^j(x_i),-N,N)$ is part of the $T$-orbit of one of the discontinuities of $T$. Moreover, every discontinuity of $T$ belongs to one of the orbits $\mathcal{O}_T(S^j(x_i))$, where $1\leq i \leq k$ and $0\leq j \leq q-1$. It follows that if $N$ is chosen large enough, then all of the discontinuities of $T$ belong to $\bigcup_{j=0}^{q-1} \bigcup_{i=1}^k S^j(\mathcal{O}_T(x_i,-N,N))$ and therefore $D(T)\cap J = \emptyset$, as claimed. 

We can also ensure that the endpoints of the components of $J$ belong to the orbits of some of the points in $D(T)$. Indeed, let $M_1$ be the largest negative integer such that $T^{M_1}(x_1)\in J$. Let $M_2$ be the smallest positive integer such that $T^{M_2}(x_1) \in J$. By construction, $M_1 < -N$ and $M_2 > N$. By shrinking $K$, we can assume that the endpoints of the $q$ connected components of $J$ are precisely the points $S^i(T^{M_j}(x_1))$ for $j=1,2$ and $i=0,1,\dots,q-1$. Notice that none of the points in the set $\bigcup_{j=0}^{q-1} S^j(\mathcal{O}_T(x_1,M_1+1,M_2-1))$ belong to $J$.

We now consider the first return map $T_J$. We claim that the set $P$ described in Lemma \ref{lem:first return} consists of exactly $d_f$ points. By definition, the points belonging to $P$ are precisely those points in the interior of $J$ whose forward $T$-orbit encounters one of discontinuities of $T$ or one of the endpoints of $J$ before returning to $J$. By construction, a point has this property if and only if its forward $T$-orbit encounters one of the sets $S^j(\mathcal{O}_T(x_1,M_1+1,M_2-1))$, where $0\leq j \leq q-1$, or one of the sets $S^j(\mathcal{O}_T(x_i,-N,N))$, where $2\leq i \leq k$ and $0\leq j \leq q-1$, before returning to $J$. Clearly, the backward $T$-orbit of each discontinuity contains precisely one point with this property, so $\vert P \vert = d_f$ as claimed.

The discussion in the preceding paragraph makes it clear that the set $P$ is $S$-invariant. Indeed, since $S$ commutes with $T$, the forward $T$-orbit of a point $p\in J$ encounters one of the sets $S^j(\mathcal{O}_T(x_i,-N,N))$ or returns to $J$ at precisely the same time that the forward $T$-orbit of $S(p)$ encounters $S^{j+1}(\mathcal{O}_T(x_i,-N,N))$ or returns to $J$, respectively.      

Since $\vert P \vert = d_f$ and $J$ has $q$ connected components, $P$ partitions $J$ into $q+d_f$ subintervals $J_1,J_2,\dots,J_{q+d_f}$. Let $l_1,l_2,\dots,l_{q+d_f}$ denote the lengths of these intervals, respectively. Let $m_1,m_2,\dots,m_{q+d_f}$ be the return times for the map $T_J$. The set $A=\bigcup_{i=1}^{q+d_f} \bigcup_{j=0}^{m_i-1} T^j(J_i)$ is $T$-invariant. Since $T$ is minimal, it must be that $A=[0,1)$. Moreover, each of the intervals $T^j(J_i)$, for $1\leq i\leq q+d_f$ and $0\leq j \leq m_i-1$, must be contained in one of the intervals on which $T$ is continuous. For if this were not the case, then some point in the interior of $J$ would have to contain a discontinuity in its forward $T$-orbit. This point would then have to belong to $P$, a contradiction. It follows that each of the intervals on which are exchanged by $T$ must be a disjoint union of some of the intervals $T^j(J_i)$, where $1\leq i\leq q+d_f$ and $0\leq j \leq m_i-1$. These intervals are themselves just translates of the intervals $J_i$, for $1\leq i\leq q+d_f$. Therefore $\text{rank}(T)$ is at most equal to the number of distinct lengths appearing among $l_1,l_2,\dots,l_{q+d_f}$. 

Since the partition $P$ which determines the intervals $J_1,J_2,\dots,J_{q+d_f}$ is $S$-invariant, and $S$ acts on the $q$ connected components of $J$ by translation, it follows that there are at most $$\frac{q+d_f}{q}=\frac{d_f}{q}+1$$ 

\noindent different lengths appearing among $l_1,l_2,\dots,l_{q+d_f}$. It follows that $$\text{rank}(T)\leq \frac{d_f}{q}+1,$$ which is exactly what we wanted to show.

In the case when some of the discontinuities are not fundamental, we can proceed as we did above. The only essential difference is that for each non-fundamental discontinuity, there might be an additional point in the set $P$. This point divides one of the intervals $J_t$ into two intervals, and increases the rank of $T$ by at most one. Our previous estimate on the rank is therefore increased by at most $d_{nf}$, the number of non-fundamental discontinuities. Thus $$\text{rank}(T)\leq \frac{d_f}{q} + 1 + d_{nf}.$$ This completes the proof. 
\end{proof}

\noindent \textit{Proof of Theorem \ref{thm:torsion rank}}. Let $T$ be an $m$-IET defined by a separating permutation. Suppose that $T$ satisfies the IDOC. Assume that $S\in C(T)$ has finite order $q\geq 2$. We have to prove that $\text{rank}(T) \leq 1 + \lfloor m/2 \rfloor$.

Since $T$ is defined by a separating permutation, $D(T)=m-1$. Since $T$ satisfies the IDOC, Lemma \ref{lem:idoc fundamental discontinuities} tells us that there are two possibilities: either $d_{f}=m-1$ and $d_{nf}=0$ or else $d_{f}=m-2$ and $d_{nf}=1$. In the first case, Theorem \ref{thm:rank estimate} tells us that $$\text{rank}(T) \leq 1+ \frac{m-1}{q}  \leq 1+ \frac{m}{2}.$$ Since $\text{rank}(T)$ is an integer, it follows that $\text{rank}(T) \leq 1 + \lfloor m/2 \rfloor$. In the second case, Theorem \ref{thm:rank estimate} tells us that $$\text{rank}(T) \leq 2+ \frac{m-2}{q}  \leq 2+ \frac{m-2}{2}=1+\frac{m}{2}.$$ Once again, we conclude that $\text{rank}(T) \leq 1 + \lfloor m/2 \rfloor$.\qed

\section{Proof of Theorem \ref{thm:torsion idoc}}

Unlike our proof of Theorem \ref{thm:torsion rank}, which was combinatorial in nature, our proof of Theorem \ref{thm:torsion idoc} is based on the notion of topological weak-mixing. The following definition of topological weak-mixing will be sufficient for our purposes. 

\begin{defi}\label{defi:topological weak mixing}
Let $\lambda$ denote Lebesgue measure on $[0,1)$. If $T$ is an IET, then the map $f \mapsto f\circ T$ defines a unitary operator on $L^2([0,1),\lambda)$. We say that $T$ is \textit{topologically weak-mixing} if this operator has no non-constant piecewise continuous eigenfunctions. 
\end{defi}

To be clear, the term ``piecewise continuous" here means that $f:[0,1) \rightarrow \mathbb{C}$ has only finitely many discontinuities, that the left and right hand limits $\displaystyle\lim_{x \rightarrow a^-}f(x)$ and $\displaystyle\lim_{x \rightarrow a^+}f(x)$ exist for every $a\in (0,1)$, and that the limits $\displaystyle\lim_{x \rightarrow 0^+}f(x)$ and $\displaystyle\lim_{x \rightarrow 1^-}f(x)$ both exist. 

The next result follows from \cite[Theorem~4.1]{NogueiraRudolph}, but we include a proof for completeness.

\begin{lem}\label{lem:eigenfunction continuity}
Suppose that $T$ satisfies the IDOC (and exchanges two or more intervals). Then any piecewise continuous eigenfunction of $T$ must be continuous everywhere on $[0,1)$.
\end{lem}
\begin{proof}
Let $f$ be a piecewise continuous eigenfunction of $T$ and let $\alpha \in \mathbb{C}$ be the eigenvalue associated to $f$. 

Let $a\in (0,1)$ and let $\epsilon >0$. Let $y\in (0,1)$ be any point at which $f$ is continuous. Then there exists an $\eta>0$ such that $\vert f(p)-f(q) \vert < \epsilon$ whenever $p,q \in (y-\eta, y+\eta)$. Consider the sets $\lbrace T^k(a) : k \geq 0 \rbrace$ and $\lbrace T^k(a) : k < 0 \rbrace$. Since $T$ is minimal, both of these sets are dense in $[0,1)$. Since $T$ satisfies the IDOC, at least one of these sets does not contain any of the discontinuities of $T$. Suppose that $\lbrace T^k(a) : k \geq 0 \rbrace$ has this property. Then $T^k$ is continuous at $a$ for all $k>0$. Choose $k\geq 0$ such that $T^k(a)\in (y-\eta, y+\eta)$. Since $T^k$ is continuous at $a$, there exists $\delta>0$ such that $T^k(a-\delta, a+\delta) \subseteq (y-\eta, y+\eta)$. If $x\in (a-\delta, a+\delta)$, then both $T^k(x)$ and $T^k(a)$ belong to $(y-\eta, y+\eta)$. Using the fact that $\vert \alpha \vert =1$, we see that $$\vert f(x)-f(a) \vert = \vert \alpha^kf(x)- \alpha^kf(a) \vert = \vert f(T^k(x)) - f(T^k(a)) \vert < \epsilon. $$ This shows that $f$ is continuous at $a$. An analogous argument can be given when the set $\lbrace T^k(a) : k < 0 \rbrace$ contains no discontinuities of $T$. Continuity of $f$ at $0$ can be proven in a similar way.
\end{proof}

The following result is folklore. The author learned about it from Michael Boshernitzan.

\begin{prop}\label{prop:3-IET Topological weak mixing}
Let $T$ be a $3$-IET which satisfies the IDOC and which is not of rotation type. Then $T$ is topologically weak-mixing.
\end{prop}
\begin{proof}
Let $f:[0,1) \rightarrow \mathbb{C}$ be a piecewise continuous eigenfunction of $T$. Let $\alpha\in \mathbb{C}$ be the associated eigenvalue. By Lemma \ref{lem:eigenfunction continuity}, $f$ is continuous everywhere on $[0,1)$.

If $f$ vanishes at some point $x\in [0,1)$, then $f$ also vanishes along the $T$-orbit of $x$, which is dense in $[0,1)$. Since $f$ is continuous, this implies that $f$ is identically zero. So we may assume that $f$ vanishes nowhere. In particular, after multiplying $f$ by a suitable constant, we may assume that $f(0)=1$. Since $f$ is continuous at $0$, $\displaystyle\lim_{x\rightarrow 0^+} f(x) =1$.

Let $\beta_1$ and $\beta_2$ be the two points at which $T$ is discontinuous. Since, $T$ is not of rotation type, the permutation defining $T$ must be $(321)$. Therefore $T([\beta_2,1))$ lies to the the far left of the unit interval. It follows that $$\lim_{x\rightarrow \beta_2^+} f(x) = \lim_{x\rightarrow \beta_2^+} \frac{f(T(x))}{\alpha} = \lim_{x\rightarrow 0^+} \frac{f(x)}{\alpha} = \frac{1}{\alpha}.$$ We also know that the interval $T([\beta_1,\beta_2))$ lies immediately to the left of $T([0,\beta_1))$. Since $f$ is continuous at $T(0)$, it follows that $$\lim_{x\rightarrow \beta_2^-} f(x) = \lim_{x\rightarrow \beta_2^-} \frac{f(T(x))}{\alpha} = \lim_{x\rightarrow 0^+} \frac{f(T(x))}{\alpha} = \lim_{x\rightarrow 0^+} f(x) = 1.$$ Since $f$ is continuous at $\beta_2$, it must be that $$1 = \lim_{x\rightarrow \beta_2^-} f(x) = \lim_{x\rightarrow \beta_2^+} f(x) = \frac{1}{\alpha}.$$ So $\alpha=1$ and $f\circ T = f$. Since $T$ is minimal, $f$ must be a constant function.
\end{proof}

The next few results involve the concept of a tower over an IET, which we now explain. Let $T$ be an $m$-IET. Here we allow the domain of $T$ to be any finite interval $[a,b)$, possibly different from $[0,1)$. Let $I_1,I_2,\dots,I_m$ be the intervals which are exchanged by $T$ and suppose that $f:[a,b)\rightarrow \mathbb{N}$ is constant on each of these intervals, say $f(x)=n_j$ for $x\in I_j$. We can define a new map $T_f$ as follows. The domain will consist of those points of the form $(x,i)$ where $x\in [a,b)$ and $i$ is an integer satisfying $1\leq i\leq f(x)$. The map $T_f$ is defined by 
$$T_f(x,i)= \begin{cases} (x,i+1) & \text{if }i+1 \leq f(x) \\ (T(x),1) & \text{otherwise} \end{cases}$$ The domain may be visualized as tower over $[a,b)$. The map $T_f$ transports a point up to the next level of the tower, unless the point is already at the top, in which case it is transported back to the first level according to the original map $T$.

\begin{defi}\label{defi:tower}
Let $T$ and $f$ be as in the preceding paragraph. We can view the map $T_f$ as an IET by laying the levels of the tower end to end, and then rescaling so that the total length of the resulting interval is one. We will refer to an IET constructed in this manner as a tower of type $(n_1,n_2,\dots,n_m)$ over $T$. If all of the $n_i$ are equal, we will refer to $T_f$ as a tower of constant height $n_1=n_2=\cdots = n_m$. 
\end{defi}

\begin{lem}\label{lem:towers not topologically weak-mixing} Let $T$ be a tower over a minimal $2$-IET. Then $T$ is not topologically weak-mixing.
\end{lem}
\begin{proof}
By \cite[Theorem~1.8]{Bernazzani1}, $T$ is conjugate in the group $\mathbb{G}$ to either a minimal $2$-IET or a tower of constant height $d>1$ over a minimal $2$-IET. Let $g$ denote the conjugating IET in either case. 

Suppose first that $gTg^{-1}$ is a $2$-IET, say $x \mapsto x+ \alpha \hspace{2pt}(\text{mod }1)$. Let $f(x)= e^{2\pi ix}$. Then the function $f\circ g$ is a non-constant piecewise continuous eigenfuncion of $T$, with eigenvalue $e^{2\pi i \alpha}$.   

Now suppose that $gTg^{-1}$ is a a tower of constant height $d>1$ over a $2$-IET. Let $I_1,I_2,\dots,I_d$ be the levels of the tower and let $\chi_1,\chi_2,\dots,\chi_d$ be their characteristic functions, respectively. Let $\zeta_d$ be a primitive $d^{\text{th}}$ root of unity. Let $f(x)= \sum_{k=1}^d \zeta_d^{k} \chi_k(x)$. Then $f\circ g$ is a non-constant piecewise continuous eigenfunction of $T$, with eigenvalue $\zeta_d$. 
\end{proof}

\begin{prop}\label{prop:not weak mixing}
Let $T$ be a $3$-IET which satisfies the IDOC and which is not of rotation type. If $S\in C(T)$ has order two, then $T$ is not topologically weak-mixing.
\end{prop}
\begin{proof}
Let $\beta_1$ and $\beta_2$ be the discontinuities of $T$. According to Lemma \ref{lem:idoc fundamental discontinuities}, at least one of $\beta_1, \beta_2$ is a fundamental discontinuity. Since $S$ has order two, Corollary \ref{cor:orbit size q} implies that both $\beta_1$ and $\beta_2$ are fundamental discontinuities, and that $S$ permutes their orbits: $S(\mathcal{O}_T(\beta_1))=\mathcal{O}_T(\beta_2)$. As in the proof of Theorem \ref{thm:rank estimate}, there exists some interval $K\subseteq [0,1)$ such that 
\begin{enumerate}
\item
$K$ and $S(K)$ are disjoint;
\item
the set $J=K\cup S(K)$ satisfies the hypothesis of Lemma \ref{lem:first return};
\item
the set $P$ described in Lemma \ref{lem:first return} is $S$-invariant and contains precisely two points.
\end{enumerate}
It follows that the first return map $T_J$ exchanges four intervals, say $J_1,J_2 \subseteq K$ and $J_3=S(J_1),J_4=S(J_2)\subseteq S(K)$. Let $m_1,m_2,m_3,m_4$ be the return times. Since the set $J$ is $S$-invariant and $S$ commutes with $T$, the return time function $n_J(x)= \inf \lbrace n \geq 1 :T^n(x)\in J \rbrace$ must be $S$-invariant. Therefore, $m_1=m_3$ and $m_2=m_4$. This implies that the restriction of $S$ to $J$ commutes with $T_J$.

Let $\pi \in S_4$ be the permutation which describes how $T_J$ rearranges the intervals $J_1, J_2, J_3, J_4$. Notice that $S$ acts on $J$ according to the permutation $(13)(24)$. Since $S$ commutes with $T_J$, $\pi$ must commute with $(13)(24)$. So $\pi$ must be one of the following: the identity, $(13)$, $(24)$, $(13)(24)$, $(12)(34)$, $(14)(23)$, $(1234)$, or $(1432)$. We can rule out the first five possibilities, since these would result in $T_J$, and hence $T$, not being minimal.

Let $P$ be a tower of type $(m_1, m_2)$ over a rotation. Let $K_1$ and $K_2$ be the intervals which are exchanged by the underlying rotation. Notice that the lengths of $K_1$ and $K_2$ are twice those of $J_1$ and $J_2$, respectively. Let $g: [0,1) \rightarrow [0,1)$ be the two-to-one map which is defined as follows. For $0 \leq j \leq m_1 -1$, $g$ scales each of the intervals $T^j(J_1)$ and $T^j(J_3)$ by a factor of two, and then maps both onto $P^j(K_1)$. For $0 \leq j \leq m_2-1$, $g$ scales each of the intervals $T^j(J_2)$ and $T^j(J_4)$ by a factor of two, and then maps both onto $P^j(K_2)$. Using the fact that $\pi$ is one of $(1234)$, $(1432)$, $(14)(23)$, one can verify that $g\circ T = P \circ g$.

By Lemma \ref{lem:towers not topologically weak-mixing}, there exists a non-constant piecewise continuous eigenfunction, say $f$, of $P$. Let $\alpha \in \mathbb{C}$ be the associated eigenvalue. Then, for any $x\in [0,1)$, we have $f(g(T(x))) = f(P(g(x))) = \alpha f(g(x)).$ Since $g$ is locally an affine map, it is clear that $f\circ g$ is piecewise continuous and non-constant. This proves that $T$ is not topologically weak mixing.   
\end{proof}

\noindent \textit{Proof of Theorem \ref{thm:torsion idoc}.} Let $T$ be a 3-IET which satisfies the IDOC and which is not of rotation type. We have to prove that $C(T)$ is torsion-free.

Suppose that $S\in C(T)$ has finite order $q>1$. By Lemma \ref{lem:idoc fundamental discontinuities}, $T$ has at least one fundamental discontinuity. Since $T$ has at most two fundamental discontinuities, Corollary \ref{cor:orbit size q} implies $q \leq 2$, so it must be that $q=2$. Proposition \ref{prop:not weak mixing} now implies that $T$ is not topologically weak-mixing. This contradicts Proposition \ref{prop:3-IET Topological weak mixing}. \qed

\section{Appendix}\label{sec:App}
In the second section of this paper, we stated that conjugation in the group $\mathbb{G}$ need not preserve the rank of an IET. For the benefit of the reader, we provide here a simple example to illustrate this phenomenon. For convenience, our example will be defined on $[0,2)$ rather than $[0,1)$.

Given a real number $s$, let $R_s:[0,1) \rightarrow [0,1)$ be the rotation ($2$-IET) defined by $R_s(x) = x + s \hspace{2pt} (\text{mod } 1)$. Let $\alpha, \beta$ be two irrational numbers lying in the interval $(0,1)$. Assume furthermore that $\beta \not\in \mathbb{Q}(\alpha)$. Define a map $S: [0,2) \rightarrow [0,2)$ as follows: \[ S(x) = \begin{cases} R_\alpha(x) & \text{if } x\in [0,1) \\ R_\beta(x-1)+1 & \text{if } x\in [1,2) \end{cases} \] Let $P:[0,2) \rightarrow [0,2)$ be the $2$-IET which interchanges $[0,1)$ and $[1,2)$. Finally, let $T= PS$. The reader can verify that \begin{enumerate}
\item every point in $[0,1)$ returns to $[0,1)$ after exactly two iterations of $T$;
\item the first return map induced by $T$ on $[0,1)$ is $R_{\alpha + \beta}$.
\end{enumerate}
These two properties together imply that $T$ is conjugate to a tower of constant height two over $R_{\alpha + \beta}$. Such a map has rank two. However, since $\beta \not\in \mathbb{Q}(\alpha)$, $T$ has rank three.

\subsection*{Acknowledgments}
I would like to thank Michael Boshernitzan for encouraging me to investigate this topic, for reading many drafts of this paper, and for indicating how to prove Proposition \ref{prop:3-IET Topological weak mixing}. I would also like to thank the referee for reading the paper carefully and making several helpful suggestions.

\end{document}